\documentclass[a4paper,oneside,english,reqno,12pt]{amsart}
\usepackage[utf8]{inputenc}
\usepackage[T1]{fontenc}
\usepackage{graphicx}
\DeclareFontFamily{OMX}{mlmex}{}
\DeclareFontShape{OMX}{mlmex}{m}{n}{%
   <->mlmex10%
   }{}%
\usepackage{mlmodern}

\usepackage[hscale=0.666, vscale=0.72]{geometry}
\usepackage{babel}
\usepackage[numbers,sort&compress]{natbib}

\usepackage[np,autolanguage]{numprint}
\usepackage{hyperref}
\hypersetup{%
  colorlinks=true,%
  citecolor=[RGB]{120,29,126},%
  pdfauthor={Jean-Fran\c{c}ois Burnol},%
  pdfsubject={Zeta function, analytic combinatorics},%
  pdfstartview=FitH,%
  pdfpagemode=UseNone,%
}

\usepackage{amsmath,amsthm,amssymb}
\usepackage{mathtools}

\DeclarePairedDelimiterX\Iffint[2]{\lbrack\!\lbrack}{\rbrack\!\rbrack}{#1\dots#2}
\DeclarePairedDelimiterX\Ioo[2]{\lparen}{\rparen}{#1,#2}
\DeclarePairedDelimiterX\Iof[2]{\lparen}{\rbrack}{#1,#2}
\DeclarePairedDelimiterX\Ifo[2]{\lbrack}{\rparen}{#1,#2}
\DeclarePairedDelimiterX\Iff[2]{\lbrack}{\rbrack}{#1,#2}

\newcommand\ZZ{\mathbb{Z}}
\newcommand\RR{\mathbb{R}}
\newcommand\PP{\mathbb{P}}
\newcommand\CC{\mathbb{C}}
\newcommand\EE{\mathbb{E}}

\newcommand\e{\mathsf{e}}
\newcommand\dt{\mathrm{d}t}
\newcommand\du{\mathrm{d}u}

\newcommand\dz{\mathrm{d}z}

\newcommand\dmu{\mathrm{d}\mu}

\DeclareMathOperator\Res{Res}
\DeclareMathOperator\Arg{Arg}

\theoremstyle{plain}
\newtheorem{theo}{Theorem}
\newtheorem{prop}{Proposition}

\theoremstyle{definition}

\newtheorem{rema}{Remark}

\allowdisplaybreaks

\title[Series for $\zeta(s)$]
 {Some series representing the Riemann zeta function}

\author[J.-F. Burnol]
 {Jean-Fran\c{c}ois Burnol}

\date{August 4, 2026.}

\subjclass[2020]{11Y60, 11B83, 33B15, 41A60 (Primary) 05A16, 11B68, 11M41, 30E15,
  60C05 (Secondary)}
\keywords{Riemann zeta function, asymptotic expansions, Bernoulli numbers,
  Gamma function}

\usepackage{setspace}

\begin{document}

\begin{abstract}
  Given an integer $b$ at least equal to $2$, we obtain a representation of
  the Riemann zeta function in the complex plane as a finite linear
  combination of geometrically convergent series.  The coefficients involve
  partial factorials and rational functions in $b^{s}$ using the Bernoulli
  numbers.  We obtain, for arbitrary $b$, and for $s$ away from the poles, the
  asymptotic expansion of these rational functions to all orders in inverse powers of
  their index $m$. Each term of the development involves a periodic function in the
  base $b$ logarithm of $m$, depending on $s$.
\end{abstract}

\maketitle

\onehalfspacing

\section{Introduction}

Let $b>1$ be an integer.  The paper is devoted to the following sequence,
indexed by non-negative integers, of meromorphic
functions of the complex variable $s$:
\begin{equation}
  \label{eq:umber}
   u_m(s) =  \frac1{m+1}\sum_{j=0}^{m}\binom{m+1}{j}B_j\frac{b^{s+j}}{b^{s+j}-b}\;.
\end{equation}
They are rational functions of $b^{-s}$. For $m\geq1$, the numerator $b^{s+j}$
can be replaced by $b$ without changing the value of the finite sum.

Here is how the contents are organized.  First, restricting to $b=2$, we
explain that $u_m(s)$ can be expressed in terms of a probability generating
function which arises in the analysis of the \emph{leader election algorithm}:
i.e.\@ the selection of a winner, out of a group of $m+1$ people, via repeated
coin tosses.  See \cite{prodinger1993}, \cite{fillmahmszp1996},
\cite[10.5.1]{szpbook2001} about this.  See
further \cite{janszp1997} for the case with biased coins.  A quantity of
interest is the expected duration of the process, which turns out to be, for
$m\geq1$:
\begin{equation}\label{eq:em}
  e_m = 1 - \sum_{k=1}^m \binom{m+1}{k}B_k\frac{1}{2^k-1}\;.
\end{equation}
The leader
election algorithm is associated with incomplete binary trees, and is related
to various algorithms discussed in volume 3 of the Treatise \cite{knuth} of
\textsc{Knuth}.  As we will explain in the first section, a linear
combination of Bernoulli numbers quite similar to the one from Equation
\eqref{eq:em} already occurs there, as well as another alternating sum not involving
Bernoulli numbers, and both display the same type of asymptotics with a main
divergence and a sub-leading term which is (surprisingly, at first)
$1$-periodic in $\log_2(m)$ (possibly up to a multiplicative factor $m$).  Such
results and others, served as test bed for various techniques constitutive of
``analytic combinatorics'' \cite{flajosedgebook}.  Particularly, the
\emph{method of Mellin transform} was developed by \textsc{Flajolet} and
collaborators to handle a variety of such problems of asymptotics.  The
references \cite{flajgrabkirsprodtich1994} and \cite{flajgoli1994} give a nice
introduction.
See also the recent historical survey
contributed by \textsc{Prodinger} \cite{prodinger2022}.

Next, now with $b>1$ arbitrary, we explain that the quantities
$(u_m(s))_{m\geq0}$ from Equation \eqref{eq:umber} originate in the method of
moments we introduced into the topic of Dirichlet series with missing digits
(\cite{burnolkempner, burnolzeta}).  We prove the following representation of
$\zeta(s)$, away from the poles of the coefficients:
\begin{equation}\label{eq:zetaum}
  \zeta(s) = \sum_{1\leq n < b^{\ell-1}} n^{-s} + \sum_{m=0}^\infty (-1)^m
  u_m(s)\frac{(s)_m}{m!}\sum_{b^{\ell-1}\leq n < b^{\ell}} n^{-m-s}\,.
\end{equation}
Here, $\ell$ is some arbitrary integer at least equal to $2$.  It will be
shown that, locally uniformly and away from poles, $u_m(s)(s)_m/m! =
O(m^{-1})$. Said otherwise, $m^su_m(s)=O(1)$. Thus, these series converge
geometrically fast.  See Theorem \ref{thm:O1/m} for the precise statement.

The finite parts at $s=1$ give series for the Euler-Mascheroni constant, which
we studied in \cite{burnoleuler} (for $b=2$).  Building upon the methods from
\cite{burnoleuler}, we obtain in Theorem \ref{thm:asympum} for any $s$ (not a
pole) the asymptotics of $m^s u_m(s)$ to all orders in inverse powers
$m^{-j}$, $j\geq0$. The numerators are $1$-periodic functions in $\log_b(m)$.

Despite rather extensive search and surveying of the existing literature, the
author has not identified any earlier occurrence of similar completely
explicit asymptotic expansions having the typical oscillations at each order. The key
technical tool, beyond the general idea of the Mellin transform representation
of power sums, regards the behavior of the ratios of two Gamma functions
$\Gamma(z+\alpha)/\Gamma(z+\beta)$, and was established by the author in
\cite{burnoleuler}.  This ratio is, of course, not something new, and
\textsc{Flajolet} and \textsc{Odlyzko} in their foundational paper
\cite{flajoodlyz1990} on singularity analysis give its asymptotic expansion
for $\alpha$ and $\beta$ fixed. They were apparently unaware that this result
(in some equivalent form) is usually attributed to \textsc{Tricomi} and
\textsc{Erd\'elyi} \cite{tricomi1951} who published it in 1951.  The key fact
though is that we need some version allowing unbounded large values of the
parameters $\alpha$ and $\beta$.  This is what the author established in
\cite{burnoleuler}.

The $(u_m(s))$ sequence has an interesting exponential generating function
(see \cite{burnolzeta}).  One of the general strategies in analytic
combinatorics first obtains the asymptotics of the generating function (to
some order), and then, typically via a saddle-point analysis, proceeds with
the ``de-poissonization'' (\cite[VIII.5.3]{flajosedgebook},
\cite[\S10]{szpbook2001}) to establish the asymptotics (to some order) of the
original coefficients.  It seems to us, that such a ``detour'' would make very
arduous the complete description of the asymptotic expansion to all orders, as
has been achieved here, via a direct study of the $u_m(s)$ quantity itself.

We conclude the paper with a graphical illustration of the case $b=2$,
$s=\frac12$ and some comments about numerical aspects.  More examples are to
be found in \cite{burnolosc}, where the general case of Dirichlet series with
missing digits was studied, but only in the half-plane of convergence, and
only to establish the oscillations of the first order approximation.

\section{Notations}

Throughout this paper, $b$ is an integer at least equal to $2$ (and is set
equal to $2$ only in the next section).  The roots of the equation $b^z=1$ are
the $\chi_k = (\log b)^{-1}2\pi i k$ for $k\in\ZZ$.

The notation $(z)_n$ is the Pochhammer symbol
$\prod_{0\leq j<n} (z+j)=\Gamma(z+n)/\Gamma(z)$.  In particular $(z)_0=1$. The
binomial symbol $\binom{z}{q}$ will be used for $z$ a complex number to
mean $z(z-1)\dots(z-q+1)/q!$, equivalently
$(z-q+1)_q/q!$ or $\Gamma(z+1)/\Gamma(z-q+1)/q!$.

\section{Leader election algorithm}

Let $n$ be a positive integer.  Consider a group of $n$ people throwing
independent non-biased coins.  On the first round, those who throw tails are
eliminated.  If everybody has thrown tails, (or heads), nobody gets
eliminated.  Those non-eliminated continue the game until only one remains.
With probability one, this completes after finitely many rounds.  Let $H_n$ be
the random variable equal to the number of rounds needed to reach that final
stage (so $H_n=\infty$ with probability zero).  Let $G_n(z)$ be the (formal
power series, at first) generating function of its probability distribution
function: $G_n(z) = \sum_{k=0}^\infty \PP(H_n=k)z^k$.  In particular $G_1(z) =
1$.  As per $G_0(z)$ it can be left undefined, but it is reasonable and useful
to declare it to be $0$ (it is not a probability generating function then).
Suppose $n\geq2$, then $H_n\geq1$ necessarily and for $k\geq0$:
\begin{equation*}
  \PP(H_n = k+1) = 2\cdot 2^{-n}\PP(H_n=k) + \sum_{j=1}^{n-1}\binom{n}{j}2^{-n}\PP(H_j=k).
\end{equation*}
This follows simply by letting $j$ be the number of winners on the first
round.  The case $j=0$ being treated especially as explained above, so it is
as if $j=n$.  At the level of the generating functions this gives the linear
recurrence
\begin{equation}\label{eq:Gnrec}
  n\geq2 \implies G_n(z) = \frac{z}{2^n - 2z}\sum_{j=1}^{n-1}\binom{n}{j}G_j(z).
\end{equation}
Hence, $G_2(z) = \frac{z}{2-z}$, $G_3(z) = \frac{3z}{(4-z)(2-z)}$, and by
induction for $n\geq1$, $G_n(z)$ times $\prod_{1\leq j<n}(2^j -z)$ is a
polynomial of degree at most $n-1$ (as the limit of the rational function at
infinity is finite, by induction).  We knew already, from its definition as a
probability generating function, that the analytic function $G_n(z)$ was well
defined for $|z|\leq 1$, with value $1$ at $z=1$, we now know that the radii
of convergence are all at least $2$ (next proposition shows they are exactly
equal to $2$ for $n\geq2$).  In particular, $H_n$ has all its moments finite.
We let $d_n = \EE(H_n)$.

We now solve the recurrence (this whole topic is well-known
\cite{prodinger1993, flajosedge1995, fillmahmszp1996}). We include details for
the benefits of the reader.
\begin{prop}
  We have, for $n\geq1$: 
  \begin{equation}\label{eq:Gn}
    G_n(z) = \sum_{k=0}^{n-1}\binom{n}{k}\frac{2^k (1-z)B_k}{2^k -z}\;.
  \end{equation}
\end{prop}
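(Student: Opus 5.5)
The plan is to verify that the right-hand side of \eqref{eq:Gn}, call it $F_n(z)$, satisfies $F_1=1$ and the recurrence \eqref{eq:Gnrec}. Since \eqref{eq:Gnrec} determines $G_n$ for all $n\geq2$ from $G_1=1$, this gives $G_n=F_n$. The case $n=1$ is immediate: only $k=0$ contributes, and it gives $B_0(1-z)/(1-z)=1$.

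Rather than manipulate rational functions directly, I will compare power series coefficients at $z=0$. This is legitimate because both sides are analytic in $|z|<2$. Expanding
\[
\frac{2^k(1-z)}{2^k-z}=(1-z)\sum_{r\geq0}2^{-kr}z^r
\]
gives $F_n(z)=(1-z)\sum_{r\ge0}Q_r(n)z^r$, with $Q_r(n)=\sum_{k=0}^{n-1}\binom nk B_k2^{-kr}$. Using $B_n(t)=\sum_k\binom nk B_kt^{n-k}$ we get
\[
Q_r(n)=2^{-rn}\bigl(B_n(2^r)-B_n\bigr)=n\,2^{-rn}\sum_{0\le a<2^r}a^{n-1},
\]
by the standard power-sum identity $B_n(N)-B_n=n\sum_{a<N}a^{n-1}$. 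Since $G_n=(1-z)\sum_r\PP(H_n\le r)z^r$, the claim becomes
\[
\PP(H_n\le r)=n2^{-rn}\sum_{0\le a<2^r}a^{n-1}.
\]
This has a transparent meaning: it is the probability that among $n$ independent uniform integers in $[0,2^r)$ the maximum is attained exactly once.

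It then remains to check that $q_r(n):=n2^{-rn}\sum_{a<2^r}a^{n-1}$ obeys the coefficientwise form of the recurrence,
\[
q_{r+1}(n)=2^{1-n}q_r(n)+2^{-n}\sum_{j=1}^{n-1}\binom nj q_r(j),\qquad n\ge2,
\]
with $q_0(n)=0$ for $n\ge2$ and $q_r(1)=1$. Both initial conditions are clear (using $0^0=1$).

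I expect this inductive step to be the main obstacle. I would first try the combinatorial route. Write an $(r+1)$-bit integer as its top bit followed by an $r$-bit integer, and let $j$ be the number of players whose top bit is $1$. If $j\ge1$, the unique-maximum event reduces to the same event among those $j$ players on their lower $r$ bits. If $j=0$, it reduces to the same event among all $n$ players. This reproduces exactly the recurrence with weights $\binom nj2^{-n}$, where $j=0$ and $j=n$ both contribute $2^{-n}q_r(n)$. Hence $q_r(n)$ equals the probability of a unique maximum, which satisfies the same recursion as $\PP(H_n\le r)$, and the two agree by induction on $r$.

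If a purely algebraic check is preferred, I would instead split the sum over $a<2^{r+1}$ as $a=2^r\epsilon+a'$ and expand $(2^r+a')^{n-1}$ binomially. Matching terms with $\binom nj\,j=n\binom{n-1}{j-1}$ should then close the computation.
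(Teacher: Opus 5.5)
Your proof is correct, and it takes a genuinely different route from the paper's.

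The paper \emph{derives} the formula using an exponential generating function in the index $n$. It turns \eqref{eq:Gnrec} into $G(z,2w)=2(1-z)w+z(\e^w+1)G(z,w)$ for $G(z,w)=\sum_n G_n(z)w^n/n!$. It then divides by $\e^{2w}-1$, so that $C=G/(\e^w-1)$ satisfies $C(z,2w)=(1-z)\frac{2w}{\e^{2w}-1}+zC(z,w)$. From this it reads off $[w^n]C$, with the Bernoulli numbers entering through $2w/(\e^{2w}-1)$, and finally multiplies back by $\e^w-1$.

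You instead \emph{verify} the known answer by expanding in powers of $z$. The Bernoulli numbers collapse into power sums, and the statement becomes
\[
\PP(H_n\le r)=n2^{-rn}\sum_{0\le a<2^r}a^{n-1}.
\]
You prove this through the unique-maximum interpretation. Your algebraic fallback also closes: splitting $a=2^r\epsilon+a'$ and using $n\binom{n-1}{j-1}=j\binom nj$ works, with $0^0=1$.

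The paper's route needs no guess and is purely formal. Yours is more elementary and gives more information. First, it gives the distribution function of $H_n$ in closed form. Its meaning is in fact a direct coupling: read each player's successive tosses as binary digits, first toss most significant. Then the survivors after $r$ rounds are exactly the players attaining the maximum, so even your induction on $r$ can be skipped.

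Second, it explains the relation between $G_{m+1}$ and $u_m$ for $b=2$ at the level of the defining series. Put $z=2^{1-s}$ with $\Re s>1$ in $G_{m+1}(z)=(1-z)\sum_{r\ge0}\PP(H_{m+1}\le r)z^r$. This gives
\[
G_{m+1}(2^{1-s})=(m+1)(1-2^{1-s})\sum_{r\ge0}2^{-rs}\sum_{0\le k<2^r}(k/2^r)^m=(m+1)\frac{2^s-2}{2^s}u_m(s)
\]
directly from \eqref{eq:umdef}, without passing through Bernoulli numbers at all.
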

\noindent Notice that for $n\geq2$, we must obtain $G_n(0) = 0$ and indeed it is true that
  $\sum_{k=0}^{n-1}\binom{n}{k}B_k = B_n(1)-B_n(0) = 0$ for $n\geq2$.
\begin{proof}
  We consider the exponential generating function of the $G_n(z)$'s, as a formal
  power series (with $G_0=0$, so the constant term in $w$ vanishes):
  \begin{equation*}
    G(z,w) = \sum_{n=0}^\infty G_n(z) \frac{w^n}{n!}\;.
  \end{equation*}
  From Equation \eqref{eq:Gnrec}, and with the usual notation for extracting 
  coefficients of formal power series:
  \begin{align*}
    n\geq2\implies 2^n G_n(z) &= z G_n(z) +  z \sum_{j=0}^{n}\binom{n}{j}G_j(z)
\\    &= z G_n(z) + z n! [w^n] (\e^w G(z,w))
\\
    n\geq2\implies [w^n] G(z, 2w) &= [w^n] (z(1 + \e^w) G(z,w)).
  \end{align*}
  So $G(z,2w) - z(1+\e^w)G(z,w)$ is reduced to its linear term in $w$ which is
  $2w - 2zw=2(1-z)w$ and we obtain
  \begin{equation*}
    G(z,2w) = 2(1-z)w + z(\e^w+1)G(z,w).
  \end{equation*}
  As $G(z,w)$ has no constant term in $w$, it makes sense to divide in the
  ring of formal power series in $w$ by $\e^{2w}-1 = (\e^w -1)(\e^w+1)$.  We obtain:
  \begin{align*}
    C(z,w) &= \frac{G(z,w)}{\e^w-1}\\
    C(z,2w) &= \frac{(1-z)2w}{\e^{2w}-1} + z C(z,w)\\
\implies [w^n]C(z,w) &= \frac{1-z}{2^n -z}[w^n]\frac{2w}{\e^{2w}-1} = \frac{1-z}{2^n -z}\frac{2^n B_{n}}{n!}\;.
  \end{align*}
  Hence:
  \begin{equation*}
    G(z,w)= (\e^w-1)\sum_{k=0}^\infty\frac{2^k (1-z)B_k}{2^k -z}\frac{w^k}{k!}\;.
  \end{equation*}
  Multiplying out and extracting $G_n(z) = n![w^n]G(z,w)$ we obtain Equation \eqref{eq:Gn}.
\end{proof}
Let $s$ be some complex number such that $2^{1-s}\not\in\{2^k, 0\leq k < n\}$.
Substituting $z = 2^{1-s}$ in Equation \eqref{eq:Gn} we obtain the formula:
\begin{equation*}
  G_n(2^{1-s}) = (1 - 2^{1-s})\sum_{k=0}^{n-1}\binom{n}{k}\frac{2^{k+s} B_k}{2^{k+s} -2}\;,
\end{equation*}
and comparison with Equation \eqref{eq:umber} gives, for $b=2$, an expression
of $u_m(s)$ in terms of the probability generating function $G_n(z)$:
\begin{equation*}
  (m+1)\frac{2^s -2}{2^s} u_{m}(s) = G_{m+1}(2^{1-s})\,.
\end{equation*}
 
Defining $e_m$ to be the opposite of the derivative with respect to $s$ at
$s=1$ of the left-hand side, divided by $\log(2)$, we obtain $e_m =
G_{m+1}'(1) = \EE(H_{m+1})$ and Equation \eqref{eq:em}.

\textsc{Prodinger} described the asymptotics of $d_n = \EE(H_{n})$ as $d_n
\sim_{n\to\infty} \log_2(n) + \frac12 - \delta(\log_2(n))$ where $\delta$ is
an explicit $1$-periodic function with zero average and small amplitude, but
what the symbol $\sim$ meant was not defined in that paper. In the survey
\cite{flajosedge1995}, \textsc{Flajolet} and \textsc{Sedgewick} wrote it
(essentially) as $d_n = \log_2(n) + \frac12 - \delta(\log_2(n)) + O(\sqrt n)$
(see \cite[Ex.\@ 4]{flajosedge1995}) which may have been a typographical
mistake, with $O(1/\sqrt n)$ being intended.  In the book of
\textsc{Szpankowski} \cite{szpbook2001}, the result with error $O(n^{-1})$ is
attributed to \textsc{Prodinger} \cite{prodinger1993} and given as Theorem
10.29, part of Exercice 10.15, whose solution presumably uses the
poissonization-depoissonization approach explained in that chapter (hence, a
very different approach from the one sketched in \cite{prodinger1993} and
\cite[p.\@ 113]{flajosedge1995}, whose proof remained incomplete).

In \cite{burnoleuler} we confirmed the $O(n^{-1})$ estimate by two distinct
methods, the second one actually using the Mellin tranform formula from both
\cite{prodinger1993} and \cite[p.\@ 113]{flajosedge1995}, providing a full
asymptotic expansion in inverse powers of $n$.

Volume 3 of the Treatise of \textsc{Knuth} \cite{knuth} contains at least
two other significant examples of a nature related with Equation \eqref{eq:em}:
  \begin{itemize}
  \item In \cite[\S5.2.2, p.\@ 130-133]{knuth}, there is an analysis, with
    credits to \textsc{de~Bruijn}, of the behavior for $n\to\infty$ of
    \begin{equation*}
      U_n = \sum_{2\leq k\leq
        n}\binom{n}{k}\frac{(-1)^k}{2^{k-1}-1}\;.
    \end{equation*}
    It is proven there that $U_n =
    n\log_2(n) + cn + n\delta(n) + O(1)$ where $c$ is some constant and
    $\delta$ is a $1$-periodic function of $\log_2(n)$, which has zero
    average.  Further, the Exercise 5.2.2-54 in that reference proposes
    another approach, leading to an exact formula for $U_n$ (see also
    \cite[p.\@ 112]{flajosedge1995}).  But, to the best of the author
    knowledge, the $O(1)$ never got rigorously justified in the literature on
    the basis of this second approach (the answer in \cite{knuth} only
    contains an elliptic indication).
  \item A second example, closer to Equation \eqref{eq:em}, is given by
    \begin{equation*}
      \sum_{2\leq k<n} \binom{n}{k}\frac{B_k}{2^{k-1}-1}\;.
    \end{equation*}
    (\cite[\S6.3, eq.\@ (18)]{knuth}).  It is accompanied with the comment
    ``\emph{This formula is probably the hardest asymptotic nut we have yet
      had to crack}''.  \textsc{Knuth} proves that this sequence has an
    asymptotic $n\log_2(n/\pi) + cn + n\delta(n) + O(1)$, where again $c$ is
    some constant and $\delta(n)$ is a $1$-periodic function of $\log_2(n)$
    with zero mean. This is established in the answer to exercise 6.3-34 of that reference
    (this answer on page 727 was updated on the occasion of the 52nd printing).
\end{itemize}

\section{Dirichlet series with missing digits}

Let us consider a ``restricted'' Dirichlet series
$K(s) = \sum'n^{-s}$, which keeps the positive integers $n$ whose radix-$b$
representation uses only certain allowed digits among $\Iffint{0}{b-1}$.
Soon, we will allow all digits, and only consider the Riemann zeta
function, but the general case brings perspective.

It is known since the work of \textsc{Allouche}, \textsc{Mendès-France}, and
\textsc{Peyrière} \cite{allouchemendespeyriere2000}
(see also
\cite{allouchecohen1985}) that each such series admits a meromorphic
continuation to the whole complex plane.  This important result of
\cite{allouchemendespeyriere2000} is even more general as it applies to
\emph{automatic} Dirichlet series. See \cite{alloshalstip2025} for a recent
paper on this and related topics.  The analytic continuation is established
thanks to an ``infinite functional equation'' which expresses linearly $K(s)$
in terms of $K(s+1)$, $K(s+2)$, \dots.  This relation had appeared in
proto-form, for the case of harmonic series with one forbidden digit, in the
work of \textsc{Baillie} \cite{baillie1979}, who had obtained the various
$\sum' n^{-1}$ (i.e.\@ $s=1$) where one of the ten decimal digits is
forbidden, each to twenty decimal places.  This 1979 work of \textsc{Baillie}
is based upon the observation that one can compute the partial restricted
harmonic sums contributed by integers having $\ell+1$ decimal digits if one
knows the inverse power sums with exponents $1$, $2$, $3$, $4$, \dots, for
(restricted) integers with only $\ell$ digits.  In turn, these inverse power
sums can be expressed in terms of those with integers having only $\ell-1$
digits, etc\dots.  This is the basis of the \textsc{Baillie} algorithm.
Considering generally $n^{-s}$ and summing over all $\ell$'s one obtains an
infinite functional equation, which is a special case of those of
\cite{allouchemendespeyriere2000}.

Recently, the author has put forward another approach which uses certain
measures $\mu_s$ (for $s$ in the half-plane of convergence) on the half-open
interval $\Ifo{0}{1}$ and expresses $K(s)$ in terms of inverse powers of
integers having at most $\ell$ digits, using as coefficients the moments
of the measure $\mu_s$.

We focus in this paper on the case of the Riemann zeta function.  Let us give
the details of the construction then.

One fixes a \emph{level} $\ell$ which is an integer at
least equal to $2$; the case with $\ell=1$ can also be considered, but requires
some additional discussion of convergence aspects (see \cite[Th.\@
1]{burnolzeta} for $\Re s>1$).  The starting point is:
\begin{equation}\label{eq:zetamu}
  \zeta(s) = \sum_{1\leq n < b^{\ell-1}} n^{-s} + \sum_{b^{\ell-1}\leq n <
    b^{\ell}} \int_{\Ifo{0}{1}}\frac{\dmu_s(x)}{(n + x)^s}\,
\end{equation}
where $\mu_s$ is the discrete complex measure (not depending on $\ell$) defined as
\begin{equation*}
  \mu_s = \sum_{j=0}^\infty b^{-js}\sum_{0\leq k < b^j}\delta_{b^{-j} k}\,.
\end{equation*}
As $\Re(s)>1$, $\mu_s$ is indeed a
complex measure, whose measure of variations is $\mu_{\Re s}$ (which has total mass
$b^\sigma/(b^\sigma -b)$, $\sigma=\Re s$).

The vocabulary of measures ceases to apply for $\Re s\leq 1$, but the moments
will allow the anlytic continuation.
They are defined as:
\begin{equation}
  \label{eq:umdef}
  u_m(s) = \int_{\Ifo01}x^m \dmu_s(x)=
  0^m +\sum_{j=1}^\infty \bigl(\sum_{0\leq k < b^j} (k/b^j)^m\bigr)b^{-js}\,.
\end{equation}
They allow, using the binomial series, to rewrite Equation \eqref{eq:zetamu}
into the form of Equation \eqref{eq:zetaum}  which is the one of interest to us here:
\begin{equation*}
  \zeta(s) = \sum_{1\leq n < b^{\ell-1}} n^{-s} + \sum_{m=0}^\infty (-1)^m
  u_m(s)\frac{(s)_m}{m!}\sum_{b^{\ell-1}\leq n < b^{\ell}} n^{-m-s}\,.
\end{equation*}
One computes $u_0(s) = b^s/(b^s - b)$ and establishes the linear recurrence for $m\geq1$
(\cite[Prop.\@ 1]{burnolzeta}):
\begin{equation}
  \label{eq:umrec}
  m\geq 1\implies u_m(s) = \frac1{b^{m+s} - b} \sum_{j=1}^m \binom{m}{j}\gamma_j u_{m-j}(s)\,,
\end{equation}
where one has set
\begin{equation*}
  \gamma_j = \sum_{0\leq d< b} d^j\,.
\end{equation*}
The recurrence \eqref{eq:umrec} implies a meromorphic continuation of $u_m(s)$
to the whole complex plane.

Indeed, $u_m(s)$ is actually the quantity from
Equation \eqref{eq:umber}: this follows from \eqref{eq:umdef}
via the expression of power sums in terms of Bernoulli numbers and
polynomials:
\begin{align*}
  \sum_{0\leq n < N} n^m &= \frac{B_{m+1}(N)-B_{m+1}}{m+1}
\\
  B_{m+1}(N)
  &= \sum_{j=0}^{m+1}\binom{m+1}{j}B_{j}N^{m+1-j}.
\end{align*}
Using this for $N=1$, we obtain for $m\geq1$ that
$\sum_{j=0}^{m}\binom{m+1}{j}B_{j}=B_{m+1}(1) - B_{m+1}(0)=0$, so \eqref{eq:umber} can equivalently
be written, for $m\geq1$, as
\begin{equation}\label{eq:umber1}
 u_m(s) =     \frac1{m+1}\sum_{j=0}^{m}\binom{m+1}{j}B_j\frac{b}{b^{s+j}-b}.
\end{equation}
More explicitly, for $m\geq1$:
\begin{equation*}
u_m(s) =  
\frac1{m+1}\frac{b}{b^s -b} - \frac{b}{2(b^{s+1}-b)} +
\sum_{1\leq k \leq \lfloor \frac m2 \rfloor}\frac{m!}{(m-2k+1)!}\frac{B_{2k}}{(2k)!}\frac{b}{b^{s+2k}-b}.
\end{equation*}
This shows that simple poles are located at the roots of $b^s=b^a$,
for $a$ in $\{1,0,-1,-3,\dots,1-2\lfloor \frac m2\rfloor\}$.

The ``explicit formulas'' \eqref{eq:umber} and
\eqref{eq:umber1} are
unstable numerically, having large contributions of alternating signs (for $s$
real).  In contrast, the recurrence Equation \eqref{eq:umrec} is stable
numerically. Another method to obtain Equations \eqref{eq:umber} or \eqref{eq:umber1}
is to first compute the exponential generating function of the moments, see
\cite[\S4]{burnolzeta}.

We show that Equation \eqref{eq:zetaum} is valid throughout the complex plane,
as a corollary to \cite[Th.\@ 2.3]{burnolcont}.
\begin{theo}\label{thm:O1/m}
  Let $\alpha_b(s)$ be the entire function
  \begin{equation*}
    (1
    -b^{1-s})(1-b^{-s})\prod_{p=1}^\infty (1 - b^{1 - 2p - s}).
  \end{equation*}
  Uniformly
  with respect to $s$ in any compact subset of the complex plane, there
  holds \[\alpha_b(s)\frac{(s)_m}{m!} u_m(s) \mathop{=}\limits_{m\to\infty}
    O(m^{-1})\,,\] and Equation \eqref{eq:zetaum}, multiplied by
  $\alpha_b(s)$, thus represents $\alpha_b(s)\zeta(s)$ throughout the complex
  plane, via a series (for each given $\ell\geq2$) of entire functions, which
  converges geometrically fast, locally uniformly.
\end{theo}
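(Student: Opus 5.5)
The plan is to prove the bound $\alpha_b(s)u_m(s)=O(m^{-\sigma})$, $\sigma=\Re s$, uniformly on compacts. It works directly from the series \eqref{eq:umdef}, extended to the left by a truncated Euler--Maclaurin expansion applied only to the large scales $b^j\gg m$. Since $(s)_m/m! = \Gamma(s+m)/(\Gamma(s)\Gamma(m+1)) = O(m^{\sigma-1})$ uniformly on compacts (with $1/\Gamma$ entire), this bound gives the claimed $O(m^{-1})$. Geometric convergence of \eqref{eq:zetaum} then follows at once, because $\sum_{b^{\ell-1}\le n<b^\ell} n^{-m-s}=O(b^{-(\ell-1)m}\,b^{\ell})$ and the resulting $O(m^{-1}b^{-(\ell-1)m})$ is summable. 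The identity itself, established for $\Re s>1$, persists by analytic continuation once we know that the series of entire functions $\alpha_b(s)\times(\text{terms})$ converges locally uniformly.

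Fix a compact set $\mathcal K$ and an integer $K$ with $2K-1>\max(0,-\min_{\mathcal K}\sigma)$. Write $S_m(N)=\sum_{0\le k<N}k^m$, so that $u_m(s)=\sum_{j\ge1}b^{-js}S_m(b^j)b^{-jm}$ for $\Re s>1$ and $m\ge1$. Choose $J=J_m$ with $b^{J}\asymp m$ and split the sum at $J$.

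\emph{Small scales, $j\le J$.} This part is a finite sum of entire functions. Comparing $\sum_k (k/N)^m$ with $\sum_{r\ge0}e^{-mr/N}$ gives the elementary bound $S_m(N)/N^m\le C(1+N/m)$. Hence this part is $O\bigl(\sum_{b^j\le m} b^{-j\sigma}(1+b^j/m)\bigr)=O(m^{-\sigma}+m^{-1}\log m)$, which is acceptable (the harmless $\log$ appears only when $\sigma$ is near $1$ or $0$; there we can enlarge $K$ or use $\sigma<1$ more carefully).

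\emph{Large scales, $j>J$.} Here I apply Euler--Maclaurin to order $2K$:
\[ \frac{S_m(N)}{N^m}=\frac{1}{m+1}\sum_{i=0}^{2K-1}\binom{m+1}{i}B_iN^{1-i}+R_{m,K}(N), \]
with $|R_{m,K}(N)|\le C_K\,N^{-m}\int_0^N|(t^m)^{(2K)}|\,\dt\le C_K'(m/N)^{2K-1}$. Summing the main terms over $j>J$ exactly gives, for each $i\le 2K-1$, the closed form
\[ \frac{1}{m+1}\binom{m+1}{i}B_i\,\frac{b^{(J+1)(1-i-s)}}{1-b^{1-i-s}}. \]
This is meromorphic, with poles only at zeros of $1-b^{1-i-s}$, and these are killed by the factor $\alpha_b(s)$ since $B_i=0$ for odd $i\ge3$. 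Its size is $O\bigl(m^{i-1}|B_i|\,m^{1-i-\sigma}\bigr)=O(m^{-\sigma})$, because $i$ ranges over a fixed finite set. The remainder series $\sum_{j>J}b^{-js}R_{m,K}(b^j)$ converges for $\sigma>1-2K$, defines a holomorphic function there, and is bounded by $O\bigl(\sum_{b^j>m}b^{-j\sigma}(m/b^j)^{2K-1}\bigr)=O(m^{-\sigma})$.

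The resulting expression is holomorphic after multiplication by $\alpha_b(s)$ and coincides with $u_m(s)$ for $\Re s>1$, so it equals $u_m(s)$ on $\mathcal K$ minus the poles. Uniformity up to the poles then follows from the maximum principle applied to $\alpha_b(s)u_m(s)$ on a slightly larger compact set.

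I expect the main obstacle to be the Euler--Maclaurin remainder estimate, which must be uniform in both $m$ and $N$ and must carry the extra saving $(m/N)^{2K-1}$ rather than merely $N(m/N)^{2K}$. The first thing I would try is writing $(t^m)^{(2K)}=m^{\underline{2K}}t^{m-2K}$ and integrating $t^{m-2K}$ exactly, which yields $N^{m-2K+1}/(m-2K+1)$ and therefore the needed factor $1/m$. A secondary point is to check that the $i$-dependent constants $|B_i|/(2\pi)^{-i}i!$ stay bounded; this holds because $K$ is fixed once $\mathcal K$ is chosen.
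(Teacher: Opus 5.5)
Your strategy is sound and genuinely different from the paper's, but the small-scale step fails as written.

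\textbf{The small-scale gap.} The bound $S_m(N)/N^m\le C(1+N/m)$ comes from keeping the term $r=0$ in the comparison with $\sum_r \e^{-mr/N}$. That constant $1$ is fatal. With it, the scales $j\le J$ contribute up to $\sum_{b^j\le m}b^{-j\sigma}$, which is of order $1$ (not $m^{-\sigma}$) as soon as $\sigma>0$.

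Even the estimate you state, $O(m^{-\sigma}+m^{-1}\log m)$, is insufficient for $\Re s\ge 1$. After multiplication by $(s)_m/m!$, which is of exact order $m^{\sigma-1}$ there, it leaves $m^{\sigma-2}\log m$. Enlarging $K$ is no remedy, because $K$ only enters the scales $j>J$.

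\textbf{The fix.} Since $k<N$, you have $r=N-k\ge1$, so
\[
  \frac{S_m(N)}{N^m}=\sum_{r=1}^{N}\Bigl(1-\frac rN\Bigr)^m\le\sum_{r\ge1}\e^{-mr/N}=\frac1{\e^{m/N}-1},
\]
which is exponentially small for $N\ll m$. Set $x_j=mb^{-j}$. The scales $j\le J$ then contribute at most $m^{-\sigma}\sum_{j\le J}x_j^{\sigma}/(\e^{x_j}-1)$. Because $x_J\in[1,b)$ and $x_{J-n}=b^nx_J$, this sum is bounded uniformly for $\sigma$ in a bounded set. Hence the small scales are $O(m^{-\sigma})$ uniformly on $\mathcal K$, with no logarithm at all.

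\textbf{The rest checks out.} The large-scale part is correct as written; your remainder computation via $\int_0^N t^{m-2K}\,\mathrm{d}t$ does give the factor $(m/N)^{2K-1}$. Moreover $\alpha_b(s)/(1-b^{1-i-s})$ is entire for $i=0,1$ and for even $i\ge2$. So the bound is directly uniform on $\mathcal K$, and you do not even need the maximum principle.

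\textbf{Comparison with the paper.} The paper does not prove the estimate from scratch. It imports from \cite[Th.\@ 2.3]{burnolcont} the bound $\alpha(s)\frac{(s)_m}{m!}u_m(s)=O(m^{-1})$, which uses the larger product $\alpha(s)=\prod_{n\ge0}(1-b^{1-n-s})$ and is proved there for general missing-digit series. It then trades $\alpha$ for $\alpha_b$ by the maximum modulus principle on rectangles avoiding the zeros of $\alpha$, using that $\alpha_b u_m$ is entire by \eqref{eq:umber}. The analytic-continuation step is the same as yours.

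Your repaired argument has three advantages:
\begin{itemize}
\item it is self-contained;
\item it gives the stronger uniform statement $\alpha_b(s)u_m(s)=O(m^{-\Re s})$, a locally uniform form of the leading term in Theorem~\ref{thm:asympum};
\item it shows directly why $\alpha_b$ suffices: the only geometric sums carrying poles are those weighted by a nonzero $B_i$.
\end{itemize}
The price is that it relies on Euler--Maclaurin for complete power sums. Unlike the cited result, it therefore does not extend to the moments of restricted-digit series.
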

\begin{proof}
  Let $\alpha(s)=\prod_{n=0}^\infty (1 - b^{1 -n -s})$. In \cite[Th.\@
  2.3]{burnolcont} (whose parameter $\lambda$ is $1$ for our case) it is
  established, for some more general coefficients $u_m(s)$ related to zeta
  series $\sum' n^{-s}$, where the sum keeps only those positive integers $n$
  whose radix-$b$ representation uses only some specific digits, that
  $\alpha(s)\frac{(s)_m}{m!} u_m(s)=O(m^{-1})$ (where $\alpha(s)u_m(s)$ is an
  entire function) holds uniformly on any compact subset
  of the complex plane.  Let us consider a rectangular contour not going
  through any of the zeros of $\alpha(s)$.  Replacing $\alpha(s)$ by
  $\alpha_b(s)$ on the contour is like multiplying by $\alpha_b(s)/\alpha(s)$ which has
  no singularity there, and the
  functions $\alpha_b(s)\frac{(s)_m}{m!} u_m(s)$, $m\geq1$, are thus on this
  contour $O(m^{-1})$,
  uniformly with respect to both $s$ and $m\geq1$.  According
  to Equation \eqref{eq:umber} all functions $\alpha_b(s)u_m(s)$, for
  $m\geq1$, are entire (this is also true for $m=0$ as $u_0(s)=b^s/(b^s-b)$).
  So the maximum modulus principle gives us the uniform $O(m^{-1})$ bound on
  the filled rectangle.

  Equation \eqref{eq:zetaum} gives an expression for $\alpha_b(s)\zeta(s)$ for
  $\Re s>1$ as a series of entire functions, which
  converges everywhere in the complex plane, and locally uniformly.  Hence,
  the identity thus obtained is valid throughout $\CC$.
\end{proof}
It proves
convenient to also consider the quantities
\begin{equation*}
  u_m^*(s) = u_m(s)\frac{(s+1)_m}{m!},
\end{equation*}
which verify the next recurrence (\cite[Eq.\@ (24)]{burnolzeta}):
\begin{equation}\label{eq:umstarrec}
  m\geq1\implies u_{m}^*(s) = \frac1{b^{m+s} -b}
  \sum_{j=1}^{m} \binom{s+m}{j}\gamma_ju_{m-j}^*(s).
\end{equation}
With these new coefficients, Equation \eqref{eq:zetaum} reads:
\begin{equation*}
  \zeta(s) = \sum_{1\leq n < b^{\ell-1}} n^{-s} + \sum_{m=0}^\infty 
  \frac{(-1)^ms\,u_m^*(s)}{m+s}\sum_{b^{\ell-1}\leq n < b^{\ell}} n^{-m-s}\,.
\end{equation*}
For a fixed $s$ distinct from the zeros of $\alpha_b(s)$, Theorem \ref{thm:O1/m} mplies in
particular that $u_m^*(s)$ is bounded as $m\to\infty$, as $u_m^*(s) =
(s+m)s^{-1} \frac{(s)_m}{m!} u_m(s)$.

\section{Contour integrals and residues}

Let $m\geq1$ and consider the continuous function on the real line,
exponentially small at infinity, which is defined as $\phi_m(u) = (1 -
\e^u)^m\e^{u/2}$ for $u\leq 0$ and $0$ for $u\geq0$. The Fourier transform
$\widehat{\phi_m}(\xi) = \int_{\RR}\e^{i\xi u}\phi_m(u)\du$ is, with
$s=\frac12+i\xi$ and a change of variable:
\begin{equation*}
 \int_{-\infty}^0(1 - \e^u)^m\e^{u/2}\e^{i\xi u}\du 
= \int_0^1(1-t)^mt^{s-1}\dt = \sum_{j=0}^m(-1)^j\frac{\binom{m}{j}}{s+j}\,.
\end{equation*}
This last result is $\frac{m!}{s(s+1)\dots(s+m)}$, which is absolutely
integrable on the line $\Re s = \frac12$.  The Fourier inversion formula
gives (with $\dz=i\mathrm{d}\xi$ for $z=\frac12+i\xi$):
\begin{equation*}
\forall u\in\RR\quad  \phi_m(u) = \frac1{2\pi i}\int_{\frac12-i\infty}^{\frac12+i\infty}
              \frac{m!}{z(z+1)\dots(z+m)}\e^{-(z-\frac12)u}\dz\,.
\end{equation*}
This can be re-written into this \emph{Mellin inversion formula}:
\begin{equation}\label{eq:mellin}
x>0\implies \frac1{2\pi i}\int_{\frac12-i\infty}^{\frac12+i\infty}
              \frac{m!}{z(z+1)\dots(z+m)}x^{-z}\dz =
              \begin{cases}
                (1-x)^m& (x\leq 1)\,,\\
                    0 &  (x\geq 1)\,.
              \end{cases}
\end{equation}
We do not need here consider the more precise
\emph{Perron formulas} which estimate the error if integrating only from
$\frac12-iT$ to $\frac12+iT$ (cf.\@ \cite[3.19]{titchmarsh},
\cite[II.2]{tenenbaumGSM}).  And we don't need the $m=0$ case with its jump at
$x=1$.

The line of integration in Equation \eqref{eq:mellin} can be shifted to any
given positive real part $\Re z =a>0$.  Let $s$ be given with real part
greater than $1$, and let us compute $u_m(s)$, for $m\geq1$, from Equation
\eqref{eq:umdef} using Equation \eqref{eq:mellin}:
\begin{align}
\notag
  u_m(s) &= \sum_{j=1}^\infty \bigl(\sum_{0\leq k < b^j} (k/b^j)^m\bigr)b^{-js}
\\
\notag
&= \sum_{j=1}^\infty \bigl(\sum_{0< k \leq  b^j} (1 - k/b^j)^m\bigr)b^{-js}
\\
&= \sum_{j=1}^\infty \sum_{k=1}^\infty \frac1{2\pi i}\int_{a-i\infty}^{a+i\infty}
              \frac{m!}{z(z+1)\dots(z+m)}k^{-z}b^{jz}\dz\; b^{-js} 
\end{align}
Choosing in the above equation $a>1$, we can permute the inner sum with the integration:
\begin{equation*}
u_m(s)  = \sum_{j=1}^\infty \frac1{2\pi i}\int_{a-i\infty}^{a+i\infty}
  \frac{m!}{z(z+1)\dots(z+m)}\zeta(z)b^{j(z-s)}\dz\,,
\end{equation*}
and if we now also impose $a<\Re s$, we can permute the remaining sum with the integration:
\begin{equation}\label{eq:ummellin}
\Re s >a > 1 \implies  u_m(s)=
  \frac1{2\pi i}\int_{a-i\infty}^{a+i\infty} \frac{m!}{z(z+1)\dots(z+m)}\zeta(z)
   \frac{b^{z}}{b^s - b^{z}}\dz\,.
\end{equation}
The formula is also valid with $b^s$ replacing $b^z$ in the numerator, as
$\zeta(z)$ is bounded for $\Re z\geq a>1$ and we can shift the abscissa of
integration to $+\infty$, showing the difference to be zero; or we start the
computation with the summation in Equation \eqref{eq:umdef} starting at $j=0$,
not $j=1$.

If in Equation \eqref{eq:ummellin} we shift the abscissa of integration to the
left, picking up the residues at the poles, we reconstitute Equation
\eqref{eq:umber} with its Bernoulli numbers, but this requires discussing a
subtle point: once we have picked the residue at $z= 1-m$ (if $m$ is even),
the line integral
slightly to its left actually vanishes (fortunately, as it ceases to be
absolutely integrable when reaching $\Re z = -m+\frac12$ a bit farther to the
left).  We do not need (contrarily to a similar situation encountered in
\cite[p.\@ 113]{flajosedge1995}) to search for some extra explanation for this
fact here, as we proved \eqref{eq:ummellin} directly, and the vanishing of the
contour integral expresses its compatibility with \eqref{eq:umber}.

We are much more interested into shifting the abscissa of integration to $\Re
z > \Re s$, picking up the (opposite of the) residues at the simple poles
where $b^z= b^s$.  Once this is done (using intermediate rectangular
contours going mid-way through poles of $1/(b^s-b^z)$), the line integral on
$\Re z = \Re s + \eta$, $\eta>0$, vanishes because both $\zeta(z)$ and
$b^z/(b^s-b^z)$ are bounded for $\Re z \geq \Re s + \eta$.  So $u_m(s)$ is
\emph{exactly} the (opposite of the) sum of the residues over the simple poles
at $z = s + \chi_k$, $k\in\ZZ$, $\chi_k = (\log b)^{-1}2\pi i k$.  The
obtained expression is suitable to some analytic continuation, as is
explained in the next Theorem.
\begin{theo}\label{thm:um}
  The meromorphic function $u_m(s)$, $m\geq1$, from Equation \eqref{eq:umber},
  is represented in the open half-plane $\Re s > -m+\frac12$, away from its
  poles at $s = q-\chi_k$, $\chi_k = (\log b)^{-1}2\pi i k$, $k\in\ZZ$, $q=1$,
  $0$, $-1$, $-3$, \dots{}, $\geq -(m-1)$, by the absolutely convergent series:
  \begin{align}\label{eq:umres}
    u_m(s) &= \frac1{\log b}\sum_{k\in\ZZ}
    \frac{m!\zeta(s+\chi_k)}{(s+\chi_k)\cdots(s+\chi_k+m)}
    \\\notag
    &=\frac1{\log b}\sum_{k\in\ZZ}
    \frac{\Gamma(m+1)}{\Gamma(s+\chi_k+m+1)}\Gamma(s+\chi_k)\zeta(s+\chi_k)
  \end{align}
\end{theo}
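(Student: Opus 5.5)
We first establish the identity for $\Re s>1$ (away from the poles $s=1-\chi_k$) by a residue computation applied to Equation \eqref{eq:ummellin}. We then extend it to $\Re s>-m+\frac12$ by analytic continuation, after showing that the series converges absolutely and locally uniformly there.

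\emph{The case $\Re s>1$.} Fix $s$ with $\Re s>a>1$. The integrand of \eqref{eq:ummellin} has simple poles at $z=s+\chi_k$, coming from $b^s=b^z$. At such a pole the derivative of $b^s-b^z$ is $-b^z\log b$, so the residue of $b^z/(b^s-b^z)$ there is $-1/\log b$.

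Shift the line of integration to $\Re z=\Re s+\eta$. Use rectangles with horizontal sides at heights $\Im s+(2k+1)\pi/\log b$, which pass midway between consecutive poles. On these sides $|b^s-b^z|$ is bounded below uniformly. The factor $m!/(z)_{m+1}=O(|z|^{-m-1})$ with $m\geq1$, and $\zeta(z)$ is bounded on the strip $a\leq\Re z\leq\Re s+\eta$. Hence the horizontal contributions tend to $0$.

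The integral on $\Re z=\Re s+\eta$ vanishes, as already explained in the text (push the abscissa to $+\infty$, using the bounded factors and the $O(|z|^{-2})$ decay). Collecting the opposites of the residues gives \eqref{eq:umres} for $\Re s>1$. The Gamma form follows from $(z)_{m+1}=\Gamma(z+m+1)/\Gamma(z)$.

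\emph{Absolute and locally uniform convergence.} Let $K$ be a compact subset of $\{\Re s>-m+\frac12\}$ avoiding the points $1-\chi_k$. The convexity bound gives $\zeta(\sigma+it)=O(|t|^{\mu(\sigma)+\epsilon})$, with $\mu(\sigma)=\frac12-\sigma$ for $\sigma\leq0$, $\mu(\sigma)\leq\frac12$ on $[0,1]$, and $\mu(\sigma)=0$ for $\sigma>1$; by the functional equation this holds uniformly for $\sigma$ in compact sets. For $s\in K$ the general term is therefore
\[
O\bigl(|k|^{-m-1}|k|^{\max(0,\frac12-\Re s)+\epsilon}\bigr).
\]
The exponent satisfies
\[
-m-1+\tfrac12-\Re s+\epsilon<-m-1+\tfrac12+m-\tfrac12+\epsilon=-1+\epsilon .
\]
We need it strictly below $-1$. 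Because $\Re s\geq -m+\frac12+\delta$ on $K$ for some $\delta>0$, we may take $\epsilon<\delta$, which gives summability, uniformly on $K$. In the region $\Re s>\frac12$ the exponent is at most $-m-1+\epsilon$, so summability is immediate there.

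Each term is meromorphic. Its only poles in the half-plane are those of $\zeta(s+\chi_k)$ at $s=1-\chi_k$ and those of $1/(s+\chi_k)_{m+1}$ at $s=-j-\chi_k$ with $0\leq j<m$. Some of the latter are cancelled by the trivial zeros of $\zeta$ at $-2,-4,\dots$, which leaves exactly the announced list $q=1,0,-1,-3,\dots$. Thus the series defines a meromorphic function on $\Re s>-m+\frac12$ that agrees with $u_m(s)$ for $\Re s>1$. By uniqueness of analytic continuation, \eqref{eq:umres} holds throughout the half-plane, away from the poles.

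The main obstacle is the uniform growth estimate for $\zeta$ in vertical strips that lie to the left of the critical line. Its exponent $\frac12-\Re s$ is exactly what fixes the boundary $\Re s=-m+\frac12$, and the bound must be uniform on compacts near poles that are excluded from $K$.
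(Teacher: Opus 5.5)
Your proposal is correct and follows essentially the same route as the paper: the residues at $z=s+\chi_k$ from \eqref{eq:ummellin} give the identity for $\Re s>1$, then polynomial growth of $\zeta$ on vertical lines gives locally uniform absolute convergence on $\Re s>-m+\frac12$, and analytic continuation plus the trivial zeros (removing the candidate poles at $-2p-\chi_k$) finish the argument. One harmless slip: for $0<\Re s<1$ the convexity bound gives exponent $(1-\Re s)/2+\epsilon$, not $\max(0,\frac12-\Re s)+\epsilon$ (that sharper claim, like your ``at most $-m-1+\epsilon$'' for $\Re s>\frac12$, is the Lindel\"of hypothesis); since summability only needs the $\zeta$-exponent to stay below $m$, your conclusion is unaffected.
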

\begin{proof}
  Equation \eqref{eq:umres} is obtained for $\Re s >1$ as explained previously
  via the calculus of residues applied to Equation \eqref{eq:ummellin}.  Due
  to $|\zeta(z)|=O(|\Im z|^{A})$ for $|\Im z|\geq 1$, $\Re z \geq -A+\frac12$,
  $A>\frac12$ (\cite[Eq.\@ (5.1.1)]{titchmarsh}), the series in Equation
  \eqref{eq:umres} converges absolutely for $\Re s > -m+\frac12$, for $s$ not
  a pole, and it does so uniformly on compact subsets of this open half-plane, if
  not containing any pole. It thus gives the analytic continuation of $u_m(s)$
  from $\Re s>1$, where the identity has been proven, to $\Re s > -m +\frac12$
  (punctured at the poles). The trivial zeros of $\zeta(s)$ at $-2$, $-4$,
  \dots, remove candidate poles at $s=-2p-\chi_k$, $p\geq1$, $2p\leq m-1$, in
  accordance with Equation \eqref{eq:umber}.  The second line is a direct
  reformulation.
\end{proof}
It looks as if Equation \eqref{eq:umres} says that $u_m(s)$ has poles at
$s=-m-\chi_k$, if $m$ is odd, but this is forgetting that its validity has
been established only for $\Re s > -m+\frac12$.  And indeed no such poles
exist in view of Equation \eqref{eq:umber}.

Switching to the sequence $(u_m^*(s))$ we obtain:
\begin{prop}\label{prop:umstar}
For $m\geq1$ and $s$ with $\Re s > -m + \frac12$, not a pole of the left-hand side:
  \begin{equation*}
    u_m^*(s) = 
    \frac1{\log b}\frac{\zeta(s)}{s}
    +
    \frac1{\log b}\sum_{\substack{k\in\ZZ\\k\neq0}}
    \frac{\Gamma(m+s+1)}{\Gamma(m+s+\chi_k+1)}
    \frac{\Gamma(s+\chi_k)\zeta(s+\chi_k)}{\Gamma(s+1)}
  \end{equation*}
\end{prop}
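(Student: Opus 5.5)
This is a direct rewriting of Theorem~\ref{thm:um}: we multiply the second line of Equation~\eqref{eq:umres} by the factor $(s+1)_m/m!$ that defines $u_m^*(s)$, and then simplify term by term.

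Write $(s+1)_m/m! = \Gamma(s+m+1)/(\Gamma(s+1)\Gamma(m+1))$. Multiplying the $k$-th term of the second line of \eqref{eq:umres} by this factor cancels the $\Gamma(m+1)$. What remains is
\begin{equation*}
  \frac{1}{\log b}\,\frac{\Gamma(m+s+1)}{\Gamma(m+s+\chi_k+1)}\,
  \frac{\Gamma(s+\chi_k)\zeta(s+\chi_k)}{\Gamma(s+1)}\,,
\end{equation*}
which is exactly the general term of the sum in the statement. Since the factor does not depend on $k$, multiplying the series termwise is legitimate: the series converges absolutely, as established in Theorem~\ref{thm:um}.

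Next we isolate $k=0$. Since $\chi_0=0$, the Gamma ratio $\Gamma(m+s+1)/\Gamma(m+s+1)$ equals $1$. Using $\Gamma(s)/\Gamma(s+1)=1/s$, the $k=0$ term reduces to $\zeta(s)/(s\log b)$. This gives the displayed formula.

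The only care needed is with the domain and the singular points. The identity comes from Theorem~\ref{thm:um}, so it holds for $\Re s>-m+\frac12$ away from the poles of $u_m$. The extra factor $(s+1)_m$ is entire, so it creates no new poles. It vanishes at $s=-1,\dots,-m$, and at such points some individual terms on the right-hand side (for instance $\zeta(s)/s$ at $s=-1$, where $\Gamma(s)/\Gamma(s+1)$ is finite) must be read through analytic continuation. I would handle these points by noting that both sides are meromorphic in $s$ on the half-plane and agree on an open set. Hence they agree wherever the left-hand side is not a pole, with the right-hand side interpreted by continuity; the removable singularities of the individual terms, coming from $\Gamma(s+\chi_k)/\Gamma(s+1)$, cause no harm. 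I do not expect any real obstacle: the statement is essentially a bookkeeping reformulation of Theorem~\ref{thm:um}.
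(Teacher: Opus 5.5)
Your proposal is correct and follows the paper's proof: multiply the series of Theorem~\ref{thm:um} termwise by the $k$-independent factor $(s+1)_m/m!$ and isolate the $k=0$ term, which reduces to $\zeta(s)/s$. You then extend by continuity to the points where $u_m$ has a pole but $u_m^*$ does not (the $k=0$ poles at $-1,-3,\dots$ cancelled by $(s+1)_m$), which is exactly what the paper's closing remark on removed poles records. The only cosmetic difference is that you work with the Gamma form of \eqref{eq:umres}, whereas the paper uses the Pochhammer form $\frac{(s+1)_m}{(s+\chi_k)_{m+1}}\zeta(s+\chi_k)$.
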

\begin{proof}
  As $u_m^*(s) = \frac{(s+1)_m}{m!}u_m(s)$, Equation \eqref{eq:umres} for $\Re
  s> -m+\frac12$ gives, at first for $s \neq q-\chi_k$, $k\in\ZZ$, $q=1$, $0$, $-1$,
  $-3$, \dots{}, $\geq -(m-1)$:
  \begin{equation*}
    \begin{split}
      u_m^*(s) = \frac1{\log b}\sum_{k\in\ZZ}
      \frac{(s+1)_m}{(s+\chi_k+1)_{m}}\frac{\zeta(s+\chi_k)}{s+\chi_k} 
\\=
      \frac1{\log b}\frac{\zeta(s)}{s} +
      \frac1{\log b}\sum_{\substack{k\in\ZZ\\k\neq0}}
      \frac{(s+1)_m}{(s+\chi_k)_{m+1}}\zeta(s+\chi_k).
    \end{split}
  \end{equation*}
  The multiplication by $(s+1)_m=(s+1)\dots(s+m)$ has removed the simple poles
  which were contributed by $k=0$ at $-1$, $-3$, \dots $\geq -(m-1)$, and has
  created ``trivial zeros'' at $-2$, $-4$, \dots, $\geq -(m-1)$.  Poles at
  $1-2p-\chi_k$, $k\neq 0$, $p\geq1$, $1-2p\geq -(m-1)$, remain.
\end{proof}

\section{Complete asymptotic of the moments}

For the convenience of the reader we state here in full a result of the author on
ratios of two Gamma functions \cite[Th.\@
4]{burnoleuler}.  First, let us observe that there are unique polynomials
$Q_j(u,v)$ verifying the conditions $Q_0=1$,
  $Q_j(u,u)=0$ for $j>0$ and either one of the following two sets of
  recurrences:
  \begin{gather*}
\forall j>0, \quad Q_j(u+1,v)-Q_j(u,v) = u Q_{j-1}(u,v)
\\
\forall j>0, \quad Q_j(u,v+1)-Q_j(u,v) = -v Q_{j-1}(u,v+1).
  \end{gather*}
$Q_j$ is of total degree $2j$.
\begin{theo}[{\cite[Thm.\@ 4]{burnoleuler}}]\label{thm:gammaratio}
  Let $c_1>0$.  Let $0<\eta<\pi$.  There exists $A>0$, such that for every
  positive integer $J$ there exists a constant $C_J$ such that for every
  complex number $z$ verifying $|z|\geq A$ and $|\Arg z|\leq\pi - \eta$, and
  for every pair of complex numbers $(\alpha, \beta)$ verifying the condition
  \begin{equation*}
    \max(|\alpha|^2,|\beta|^2)\leq c_1 |z|,
  \end{equation*}
  the following inequality holds:
  \begin{equation*}
    \left|\frac{\Gamma(z+\alpha)z^{\beta-\alpha}}{\Gamma(z+\beta)}
      - \sum_{0\leq j<J}\frac{Q_j(\alpha,\beta)}{z^j}\right|
    \leq C_J\frac{|\alpha-\beta|\max(1, |\alpha|, |\beta|)^{2J-1}}{|z|^J}\,,
  \end{equation*}
  with the polynomials $Q_j$ mentioned earlier.
\end{theo}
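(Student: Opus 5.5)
The natural route is through Stirling's formula with remainder, applied not at $z$ but at the shifted points $z+\alpha$ and $z+\beta$. The parameters are allowed to grow like $|z|^{1/2}$, so the points stay in a slightly smaller sector. First I choose $A$ large enough (depending on $c_1$ and $\eta$) that $|\alpha|\leq (c_1|z|)^{1/2}\leq \frac12\sin(\eta/2)|z|$ whenever $|z|\geq A$. Then every $w=z+t\alpha+(1-t)\beta$, $t\in\Iff01$, satisfies $|w|\geq|z|/2$ and $|\Arg w|\leq \pi-\eta/2$.

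On that smaller sector I use the standard Stirling expansion with remainder, uniform in the sector:
\[
\log\Gamma(w)=(w-\tfrac12)\log w-w+\tfrac12\log 2\pi+\sum_{1\leq n<N}\frac{B_{2n}}{2n(2n-1)w^{2n-1}}+R_N(w),
\qquad R_N(w)=O(|w|^{-2N+1}),
\]
together with the analogous bound for $R_N'(w)$. I set
\[
F(\alpha)=\log\Gamma(z+\alpha)-\alpha\log z ,
\]
so the quantity to study is $\exp(F(\alpha)-F(\beta))$. I expand $(z+\alpha-\frac12)\log(1+\alpha/z)$ and the terms $(z+\alpha)^{1-2n}$ in powers of $\alpha/z$. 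A typical term is $\alpha^{k+1}/z^{k}=(\alpha^2/z)\,(\alpha/z)^{k-1}$, and more generally a monomial $\alpha^p z^{-q}$ with $p\leq 2q$. Because $|\alpha|^2/|z|\leq c_1$ and $|\alpha|/|z|\leq (c_1/|z|)^{1/2}$, each such term is bounded by a constant times $\max(1,|\alpha|)^{2q}|z|^{-q}$. Hence the tail beyond $z^{-J}$, including the Stirling remainder, is $O(\max(1,|\alpha|)^{2J}|z|^{-J})$.

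To get the factor $|\alpha-\beta|$, I do not bound $F(\alpha)$ and $F(\beta)$ separately. Instead I write
\[
F(\alpha)-F(\beta)=(\alpha-\beta)\int_0^1\bigl(\psi(z+\beta+t(\alpha-\beta))-\log z\bigr)\,\dt ,
\]
and run the same expansion on $\psi(w)-\log z$, with $\psi$'s own Stirling expansion and remainder. This gives
\[
E:=F(\alpha)-F(\beta)=\sum_{1\leq j<J}P_j(\alpha,\beta)z^{-j}+O\Bigl(|\alpha-\beta|\max(1,|\alpha|,|\beta|)^{2J-1}|z|^{-J}\Bigr),
\]
where each $P_j$ is a polynomial of total degree at most $2j$ divisible by $(\alpha-\beta)$. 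In particular $E$ is bounded uniformly, since its leading part is essentially $(\alpha^2-\beta^2)/(2z)$. I then exponentiate: $\e^{E}$ equals its Taylor polynomial in $E$ plus a remainder controlled by $|E|^{K}$. Since each factor of $E$ carries $|\alpha-\beta|$ and the "weight" bookkeeping (degree $\leq 2\times$ power of $1/z$) is preserved under products, collecting powers of $1/z$ gives $\sum_{j<J}\tilde Q_j(\alpha,\beta)z^{-j}$. The error is $O(|\alpha-\beta|\max(1,|\alpha|,|\beta|)^{2J-1}|z|^{-J})$, using $|\alpha-\beta|\leq 2\max(|\alpha|,|\beta|)$ to absorb surplus factors.

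It remains to identify $\tilde Q_j=Q_j$. The $\tilde Q_j$ are polynomials, $\tilde Q_0=1$, and $\tilde Q_j(u,u)=0$ for $j>0$. The functional equation $\Gamma(z+\alpha+1)=(z+\alpha)\Gamma(z+\alpha)$ gives, at the level of full asymptotic series in $1/z$ (valid by the uniform estimates with $\alpha,\beta$ fixed),
\[
\sum_j \tilde Q_j(\alpha+1,\beta)z^{-j}=(1+\alpha/z)\sum_j\tilde Q_j(\alpha,\beta)z^{-j}.
\]
This is exactly the first recurrence, so uniqueness gives $\tilde Q_j=Q_j$.

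I expect the main obstacle to be the bookkeeping of uniformity: making sure every remainder, both Stirling's and the Taylor remainders for $\log(1+\alpha/z)$, $(1+\alpha/z)^{1-2n}$ and $\exp$, is bounded with constants depending only on $c_1,\eta,J$, while keeping the exact power $\max(1,|\alpha|,|\beta|)^{2J-1}$ and the factor $|\alpha-\beta|$. The integral representation along the segment from $\beta$ to $\alpha$ is what I would rely on to get that factor cleanly.
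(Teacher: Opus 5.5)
This paper gives no proof of the statement. It is quoted from \cite[Thm.~4]{burnoleuler} and used as a black box in the proofs of Theorems~\ref{thm:asymp} and~\ref{thm:asympum}, so there is no in-paper argument to compare yours with. Judged on its own, your proposal is a correct proof, and the essential points are in place:
\begin{itemize}
\item $A$ depends only on $c_1$ and $\eta$, as it must, since the statement fixes $A$ before $J$.
\item The whole segment $z+\beta+t(\alpha-\beta)$ lies in $|\Arg w|\le\pi-\eta/2$, $|w|\ge|z|/2$, where the Stirling remainder for $\psi$ is uniform.
\item Writing the logarithm of the ratio as $(\alpha-\beta)\int_0^1\bigl(\psi(z+\beta+t(\alpha-\beta))-\log z\bigr)\,\dt$ is what produces the factor $|\alpha-\beta|$.
\item The hypothesis $\max(|\alpha|,|\beta|)^2\le c_1|z|$ is exactly what keeps $E$ bounded. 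Its first term is $(\alpha-\beta)(\alpha+\beta-1)/(2z)$, which is bounded but not small, so $\exp$ can be Taylor-expanded with uniform constants.
\item You identify $\tilde Q_j=Q_j$ through $\Gamma(w+1)=w\Gamma(w)$, the vanishing $\tilde Q_j(u,u)=0$, and uniqueness of polynomial solutions, which avoids any explicit computation.
\end{itemize}

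Two minor remarks. First, your claim that $F(\alpha)$ contains only monomials $\alpha^p z^{-q}$ with $p\le 2q$ overlooks the $q=0$ term $\alpha$ coming from $z\log(1+\alpha/z)$. It cancels against the $-\alpha$ in $-(z+\alpha)$, and you do not use $F$ separately anyway.

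Second, the $\psi$ route gives more than you state. Set $M=\max(|\alpha|,|\beta|)$. The coefficients of $E$ are $\frac{(-1)^{j+1}}{j(j+1)}\bigl(B_{j+1}(\alpha)-B_{j+1}(\beta)\bigr)$, of degree $j+1$, and the remainder of $E$ after truncation is $O(|\alpha-\beta|\max(1,M)^{J}|z|^{-J})$. So the exponent $2J-1$ arises entirely from the exponentiation. The one estimate worth writing out there is $|E|\le C|\alpha-\beta|\max(1,M)|z|^{-1}$. It bounds $\e^{|E|}$ uniformly, and together with $|\alpha-\beta|\le 2M$ it gives $|E|^J\le C'|\alpha-\beta|\max(1,M)^{2J-1}|z|^{-J}$. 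The same device handles each of the finitely many products $P_{j_1}\cdots P_{j_r}z^{-(j_1+\dots+j_r)}$ with $j_1+\dots+j_r\ge J$, since $\max(1,M)^2/|z|$ is bounded.
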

The above theorem is a quantitative (but non explicit) re-inforcement of a well-know result of
\textsc{Tricomi} and \textsc{Erd\'elyi} \cite{tricomi1951}.

We now adapt the technique of \cite[Proof of Th.\@ 3]{burnoleuler},
and establish the full asymptotic of the coefficients $u_m^*(s)$, for a
given $s$.
\begin{theo}\label{thm:asymp}
  Let $s\notin(\{1,0\}+(\log b)^{-1}2\pi i \ZZ)\cup\bigcup_{p\geq 1}\bigl(
  1-2p+(\log b)^{-1}2\pi i \ZZ\setminus\{0\}\bigr)$. Let $P_{0,s+1}=1$ and
  $P_{j,s+1}\in\CC[t]$ be defined by recurrence for $j\geq1$ by the conditions
  \begin{equation*}
P_{j,s+1}(0)=0, \qquad
    P_{j,s+1}(t+1)-P_{j,s+1}(t) = -(t+s+1)P_{j-1,s+1}(t+1)\,.
  \end{equation*}
   Let $\psi_{j,s}$ for
  $j\geq0$ be the $1$-periodic smooth function with zero mean
  \begin{equation*}
    \psi_{j,s}(t) =
    \sum_{\substack{k\in \ZZ\\k\neq 0}}
    P_{j,s+1}(\chi_k)\Gamma(s+\chi_k)\zeta(s+\chi_k)\e^{-2\pi i kt}\,.
  \end{equation*}
  Let $J$ be any positive integer.  There holds:
  \begin{equation}
    \label{eq:asymp}
    u_m^*(s) =   \frac1{\log b}\frac{\zeta(s)}{s}
    + \frac1{(\log b)\Gamma(s+1)}\sum_{0\leq j < J}
    \frac{\psi_{j,s}(\log_b(m))}{m^j} + O_{m\to\infty}(m^{-J})\,.
  \end{equation}
\end{theo}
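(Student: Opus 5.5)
The plan is to start from Proposition \ref{prop:umstar}, valid for $m$ large (so that $\Re s>-m+\frac12$). The $k=0$ term is already $\frac{\zeta(s)}{s\log b}$. For each $k\neq0$, the remaining term is the Gamma ratio
\[
\frac{\Gamma(m+s+1)}{\Gamma(m+s+\chi_k+1)}.
\]
We apply Theorem \ref{thm:gammaratio} with $z=m$, $\alpha=s+1$, $\beta=s+1+\chi_k$. This gives
\[
\frac{\Gamma(m+s+1)}{\Gamma(m+s+1+\chi_k)} = m^{-\chi_k}\Bigl(\sum_{0\le j<J}\frac{Q_j(s+1,s+1+\chi_k)}{m^j}+R_{J,k}(m)\Bigr).
\]
Since $m^{-\chi_k}=\e^{-2\pi i k\log_b m}$, each main term produces the Fourier series of $\psi_{j,s}(\log_b m)$. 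This requires identifying $Q_j(s+1,s+1+t)$ with $P_{j,s+1}(t)$, and the division by $\Gamma(s+1)$ is already present in the proposition.

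For the identification, fix $u=s+1$ and set $P(t)=Q_j(u,u+t)$. The second recurrence for $Q_j$, with $v=u+t$, reads
\[
P(t+1)-P(t)=-(u+t)\,Q_{j-1}(u,u+t+1)=-(t+s+1)P_{j-1}(t+1),
\]
and $Q_j(u,u)=0$ gives $P(0)=0$. These are exactly the defining conditions of $P_{j,s+1}$, and uniqueness of the solution of such a difference equation fixes $P$. So $Q_j(s+1,s+1+\chi_k)=P_{j,s+1}(\chi_k)$.

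The main obstacle is the constraint $\max(|\alpha|^2,|\beta|^2)\le c_1|z|$, which allows only $|\chi_k|\lesssim\sqrt m$. I would therefore split the sum at $|k|\le K_m$ with $K_m\asymp\sqrt m$, choosing $c_1$ so that this range is covered.

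On the range $|k|\le K_m$, the remainder is bounded by
\[
C_J\,|\chi_k|\,\max(1,|s|,|\chi_k|)^{2J-1}m^{-J}.
\]
It is multiplied by $|\Gamma(s+\chi_k)\zeta(s+\chi_k)|$, which decays like $\e^{-\pi|\chi_k|/2}|\chi_k|^{A}$ by Stirling's formula and the polynomial bound on $\zeta$ used in Theorem \ref{thm:um}. The sum over $k$ of these terms is therefore $O(m^{-J})$. The same exponential decay, against the polynomial growth of $P_{j,s+1}(\chi_k)$ (of degree at most $2j$), shows that $\psi_{j,s}$ converges absolutely, is smooth, and that its tail $|k|>K_m$ is $O(\e^{-c\sqrt m})$.

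It remains to control the original terms for $|k|>K_m$ without Theorem \ref{thm:gammaratio}. I would bound $|\Gamma(m+s+1)/\Gamma(m+s+1+\chi_k)|$ crudely by Stirling's formula. With $\Re$ parts fixed, the modulus is at most something like $\exp(\pi|\chi_k|/2)$ times a polynomial in $|\chi_k|$ and $m$; alternatively one can use $|\Gamma(x+iy)|\ge$ an explicit lower bound, or write the ratio as $\Gamma(s+1)\,/\,\bigl((s+1+\chi_k)_m\,\Gamma(s+1+\chi_k)\bigr)$ times $(s+1)_m$. Combined with $|\Gamma(s+\chi_k)|\approx \e^{-\pi|\chi_k|/2}|\chi_k|^{\Re s-1/2}$, this exponential cancellation is the delicate point. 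I would instead use the form $(s+1)_m/(s+\chi_k)_{m+1}$: each factor satisfies $|s+j|/|s+\chi_k+j|\le1$ up to a bounded set of small $j$ (for large $|\chi_k|$, once $\Re(s+j)>0$). This gives a bound $O(|\chi_k|^{-1})\cdot C_s$ with no growth in $m$ beyond a constant, and a sharper version gives decay $\prod_j(1+|\chi_k|^2/(j+\Re s)^2)^{-1/2}\le \exp(-c\,|\chi_k|^2/m)$ for $|\chi_k|\le m$, or an even stronger bound beyond. Multiplied by $|\zeta(s+\chi_k)|=O(|\chi_k|^A)$, the tail $|k|>K_m\asymp\sqrt{m\log m}$ is then $O(m^{-J})$, taking $K_m=C\sqrt{m\log m}$ with $C$ large. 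This requires re-checking that Theorem \ref{thm:gammaratio} still covers $|\chi_k|^2\le C^2 m\log m$. If it does not, I would handle the intermediate range $\sqrt m\lesssim|k|\lesssim\sqrt{m\log m}$ by the Gaussian bound, which is still super-polynomially small, against the $\e^{-\pi|\chi_k|/2}$ decay of the $\psi_{j,s}$ coefficients. Assembling the three ranges yields \eqref{eq:asymp}.
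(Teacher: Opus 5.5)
Your skeleton is the paper's. You start from Proposition \ref{prop:umstar} and apply Theorem \ref{thm:gammaratio} with $z=m$, $\alpha=s+1$, $\beta=s+1+\chi_k$ on a range $|\chi_k|\lesssim\sqrt m$. You then sum the errors against the exponential decay of $\Gamma(s+\chi_k)\zeta(s+\chi_k)$ and restore the full Fourier series. Your identification $Q_j(s+1,s+1+t)=P_{j,s+1}(t)$, via the second recurrence and uniqueness, is correct and more explicit than the paper, which only refers to an earlier remark.

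\textbf{The gap is the tail.} Your bounds on $(s+1)_m/(s+\chi_k)_{m+1}$ are of two kinds, and neither closes it.
\begin{itemize}
\item The bound $O(|\chi_k|^{-1})$ is not summable against $|\zeta(s+\chi_k)|=O(|k|^A)$.
\item The bound $\exp(-c|\chi_k|^2/m)$ forces the cutoff $K_m=C\sqrt{m\log m}$.
\end{itemize}
Theorem \ref{thm:gammaratio} does not reach that cutoff. Its constants depend on the fixed $c_1$ in $\max(|\alpha|^2,|\beta|^2)\le c_1|z|$, and covering $|\chi_k|^2\le C^2m\log m$ would need $c_1$ of order $\log m$.

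Your fallback for the intermediate range also fails. For $|\chi_k|\asymp\sqrt m$, the bound $\exp(-c|\chi_k|^2/m)$ is of order $1$, not super-polynomially small. Summing it against $|\zeta(s+\chi_k)|/|s+\chi_k|=O(|\chi_k|^{A-1})$ over $\sqrt m\lesssim|\chi_k|\lesssim\sqrt{m\log m}$ yields only $O(m^{A/2})$, not $O(m^{-J})$. The decay $\e^{-\pi|\chi_k|/2}$ you invoke there controls the $\psi_{j,s}$ terms, not the original ones. The range $|\chi_k|>m$ is also left at ``an even stronger bound''.

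\textbf{The missing point is one you nearly stated.} The real parts of $s+j$ and $s+\chi_k+j$ coincide. Hence $|s+j|\le|s+\chi_k+j|$ holds for \emph{every} real $j$ as soon as $|\chi_k|\ge 2|\Im s|$; no condition $\Re(s+j)>0$ is needed. So keep the first $N$ factors rather than just one. For $m\ge N$ and $|\chi_k|\ge 2|\Im s|$, uniformly in $m$:
\[
\Bigl|\frac{(s+1)_m}{(s+\chi_k)_{m+1}}\Bigr|\le\frac{|s+1|\cdots|s+N|}{|s+\chi_k|\cdots|s+\chi_k+N|}=O\bigl(|\chi_k|^{-N-1}\bigr).
\]
Take $N=2J+A$, where $|\zeta(s+\chi_k)|=O(|k|^A)$. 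Then each tail term is $O(|k|^{-2J-1})$, and the sum over $|\chi_k|>\sqrt m$ is $O(m^{-J})$.

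This handles the whole tail at once, including $|\chi_k|>m$. The cutoff sits exactly at $\sqrt m$, where Theorem \ref{thm:gammaratio} applies, so no $\sqrt{m\log m}$ threshold and no intermediate range arise. This is the paper's argument. A finer analysis of your product, splitting at $j\approx|\chi_k|$, would also give exponential decay in $|\chi_k|$, but it is unnecessary.

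Finally, the integers $s\in\{-1,-2,\dots\}$, which the paper checks separately, cause no trouble in your formulation. There $1/\Gamma(s+1)=0$, so every $k\neq 0$ term of Proposition \ref{prop:umstar} vanishes for $m$ large.
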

\begin{proof}
  We note first that $\zeta$ having at most polynomial growth on each vertical
  line, and $\Gamma$ having exponential decrease, the series defining
  $\psi_{j,s}(t)$ for $t\in \RR$ is absolutely convergent, except perhaps if there
  is (at most one) $k\neq 0$ such that $s+\chi_k$ is a pole of $\zeta$ or of
  Gamma.  Considering the product $\Gamma(w)\zeta(w)$ to be already formed, it
  has poles at $1$, $0$, and at the odd negative integers.  So $s$ can
  cause a singular term to appear in the definition of $\psi_{j,s}$ only if it is one
  of $q - \chi_k$, for $k\in\ZZ$, $k\neq 0$, and $q\in\{1,0,-1,-3,\dots\}$.
  And the Theorem statement has excluded these values from consideration.

  For $s$ an even negative integer, $s=-2p$, the factor $\Gamma(s+1)^{-1}$ in
  Equation \eqref{eq:asymp} vanishes, as does $\zeta(s)$. So the estimate
  will certainly be valid if $u_m^*(-2p)=0$ for $m$
  large enough.  In fact $u_m^*(s)=(s+1)_m (m!)^{-1}u_m(s)$ and from
  \eqref{eq:umber}, $u_m$ is regular at every $s=-2p$, $p\geq1$, so
  $u_m^*(-2p)=0$ for $0<2p\leq m$.

  For $s$ an odd negative integer $1-2p$, $p\geq1$, again
  $\Gamma(s+1)^{-1}$ vanishes, so Equation \eqref{eq:asymp} is certainly valid
  if $u_m^*(1-2p) = (\log b)^{-1}\frac{\zeta(1-2p)}{1-2p}$
  for $m$ large enough.  We establish this indeed for $m\geq 2p$.
  From its definition, we obtain then
  \begin{equation*}
    u_m^*(1-2p) = \frac{(2p-2)!(m+1-2p)!}{m!}\Res_{s=1-2p} u_m(s)\,,
  \end{equation*}
   and from Equation \eqref{eq:umber}, using again $2\leq 2p\leq m$:
  \begin{align*}
    \Res_{s=1-2p}u_m(s) &= \frac{m!}{(m-2p+1)!(2p)!}(\log b)^{-1}B_{2p}
\\
  &= -(\log b)^{-1}\frac{m!}{(m+1-2p)!(2p-1)!}\zeta(1-2p)\,,
  \end{align*}
so, indeed, in conformity with Equation \eqref{eq:asymp},
$u_m^*(1-2p)  = -(\log b)^{-1}\frac{\zeta(1-2p)}{2p-1}$.

We now establish the validity of the asymptotic expansion Equation
\eqref{eq:asymp} for $s$ as in the Theorem statement, and not a negative
integer, so $\Gamma(s+1)$ has no pole.  From  Proposition \ref{prop:umstar}
we have, for $m>\max(0, \frac12 - \Re s)$,
  \begin{equation}\label{eq:umstarres}
    u_m^*(s) = \frac1{\log b}\frac{\zeta(s)}s 
+ \frac1{\log b}\sum_{\substack{k\in\ZZ\\ k\neq 0}}
    \frac{(s+1)\dots(s+m)}{(s+\chi_k)\dots(s+\chi_k+m)}\zeta(s+\chi_k)\,.
  \end{equation}
  For every natural integer $n$ (even for every real number) there holds
  $|s+n|\leq |s+\chi_k+n|$ if $k$ is such that $|\Im s + 2\pi (\log b)^{-1}
  k|\geq |\Im s|$.  This will be the case if $|\chi_k|\geq 2|\Im s|$.  So if
  $|\chi_k|>\sqrt{m}$ and $\sqrt{m}\geq 2|\Im s|$, there holds
  $\frac{|s+n|}{|s+\chi_k+n|}\leq 1$ for every natural integer $n$.

  Let $J$ be any positive integer.  Let $A$ be a non-negative integer such that
  $|\zeta(s+\chi_k)|=O_{k\to\pm\infty}(|k|^A)$ ($s$ is fixed in this proof). For 
  $m> \max(2J +A, 4|\Im s|^2,\frac12-\Re s)$,
  and $k$ verifying $|\chi_k|>\sqrt{m}$:
  \begin{align*}
    \left|\frac{(s+1)\dots(s+m)}{(s+\chi_k)\dots(s+\chi_k+m)}\zeta(s+\chi_k)\right|
&=O(\frac {|s+1|\cdots|s+2J+A|}{|s+\chi_k|\cdots|s+\chi_k+2J+A|} |k|^A)
\\&=O_{|k|\to\infty}(|k|^{-2J-1})\,.
  \end{align*}
  Hence, the summation for $|k|>\sqrt m$ is
  $O(m^{-J})$.  Consequently, keeping in Equation
  \eqref{eq:umstarres} only the terms with $|\chi_k|\leq
  \sqrt{m}$ gives an approximation which differs from $u_m^*(s)$ by $O(m^{-J})$.

We express the general term for $k$ with $|\chi_k|\leq \sqrt{m}$ as:
\begin{equation*}
  \frac1{\log b}
    \frac{\Gamma(m+s+1)m^{\chi_k}}{\Gamma(m+s+1+\chi_k)}
    \Gamma(s+1)^{-1}\Gamma(s+\chi_k)\zeta(s+\chi_k)m^{-\chi_k}\,.
\end{equation*}
and observe that $m^{-\chi_k} = \exp(-2\pi i k \log_b(m))$.

There is some positive constant $c_1$ such that (recalling $m>0$, even $m\geq3$):
\begin{equation*}
  |\chi_k|\leq \sqrt m\implies \max(|s+1|^2, |s+1+\chi_k|^2)\leq c_1 m\,.
\end{equation*}

Theorem \ref{thm:gammaratio} allows, using $z=m$, $\alpha=s+1$,
$\beta=s+1+\chi_k$, to replace the ratio
$\frac{\Gamma(m+s+1)m^{\chi_k}}{\Gamma(m+s+1+\chi_k)}$ by the finite
approximation $\sum_{0\leq j < J} P_{j,s+1}(\chi_k)m^{-j} +
O(|\chi_k|^{2J}m^{-J})$.  We refer the reader to \cite[Rem.\@ 8]{burnoleuler}
regarding the sequence of polynomials $(P_{j,s+1})_{j\geq0}$ ($a=s+1$ in the
notation of that Remark). There holds $P_{j,s+1}(t) = (-1)^j
\frac{(t)_j}{j!}B_j^{(1-t)}(s+1)$, where $B_j^{(\rho)}$ is a \emph{generalized
  Bernoulli polynomial} \cite{norlund1961, tricomi1951}.  As
$\Gamma(s+\chi_k)\zeta(s+\chi_k)m^{-\chi_k}$ has exponential decrease for
$|k|\to\infty$, this is still the case after multiplying it by
$|\chi_k|^{2J}$, and the combined error term over all $k$'s with $|\chi_k|\leq
\sqrt{m}$ is thus $O(m^{-J})$.  We then lift the restriction $|\chi_k|\leq
\sqrt{m}$, reconstituting the complete Fourier series (over $k\neq0$)
$\psi_{j,s}(\log_b(m))$, $0\leq j<J$. For more details of this reasoning, we
refer the reader to \cite[Proof of Th.\@ 3]{burnoleuler}.
\end{proof}

A repetition of the arguments, but starting from Equation \eqref{eq:umres},
leads to the similar result regarding the $(u_m(s))$ sequence.
\begin{theo}\label{thm:asympum}
  Let $s\notin\{1,0,-1,-3,\dots\}+(\log b)^{-1}2\pi i \ZZ$. Let $P_{0,1}=1$ and
  $P_{j,1}\in\CC[t]$ be defined by recurrence for $j\geq1$ by the conditions
  \begin{equation*}
P_{j,1}(0)=0, \qquad
    P_{j,1}(t+1)-P_{j,1}(t) = -(t+1)P_{j-1,1}(t+1)\,,
  \end{equation*}
so that $P_{1,1}(t) = -
  t(t+1)/2$, $P_{2,1}(t) = t(t+1)(t+2)(3t+1)/24$, and $P_{3,1}(t)=-t^2(t+1)^2(t+2)(t+3)/48$.
   Let $\phi_{j,s}$ for
  $j\geq0$ be the $1$-periodic smooth function
  \begin{equation*}
    \phi_{j,s}(t) =
    \sum_{k\in \ZZ}
    P_{j,1}(s+\chi_k)\Gamma(s+\chi_k)\zeta(s+\chi_k)\e^{-2\pi i kt}\,.
  \end{equation*}
  Let $J$ be any positive integer.  There holds:
  \begin{equation*}
    u_m(s) = 
   \frac{m^{-s}}{\log b}\sum_{0\leq j < J}
    \frac{\phi_{j,s}(\log_b(m))}{m^j} +
    O_{m\to\infty}(m^{-\Re s -J})\,.
  \end{equation*}
\end{theo}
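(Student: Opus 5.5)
Starting from Equation \eqref{eq:umres} (valid once $m>\frac12-\Re s$), I write each term as
\begin{equation*}
  \frac{1}{\log b}\,\frac{\Gamma(m+1)\,m^{s+\chi_k}}{\Gamma(m+1+s+\chi_k)}\;\Gamma(s+\chi_k)\zeta(s+\chi_k)\;m^{-s}\,m^{-\chi_k},
\end{equation*}
with $m^{-\chi_k}=\e^{-2\pi i k\log_b(m)}$. The plan is to expand the Gamma ratio by Theorem \ref{thm:gammaratio} with $z=m$, $\alpha=1$, $\beta=1+s+\chi_k$. The expansion $\sum_{j<J}Q_j(1,1+w)m^{-j}$ then has to be identified with $\sum_{j<J}P_{j,1}(w)m^{-j}$, where $w=s+\chi_k$. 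The identification is a recurrence check. The polynomials $R_j(w)=Q_j(1,1+w)$ vanish at $w=0$, because $Q_j(u,u)=0$. The second recurrence for $Q_j$, with $u=1$ and $v=1+w$, gives $R_j(w+1)-R_j(w)=-(1+w)R_{j-1}(w+1)$. These are exactly the defining conditions of $P_{j,1}$, and uniqueness then gives $R_j=P_{j,1}$. The small cases $P_{1,1}=-t(t+1)/2$ etc.\ can be checked against this recurrence directly.

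The tail is handled as in the proof of Theorem \ref{thm:asymp}. I split the sum over $k$ at $|\chi_k|\le\sqrt m$. For $|\chi_k|>\sqrt m$ (and $m\geq 4|\Im s|^2$) I use $|n+\Re s+\dots|$ comparisons: for integers $n\ge1$, $|s+n|\le|s+\chi_k+n|$. This gives
\begin{equation*}
  \left|\frac{m!}{(s+\chi_k)\cdots(s+\chi_k+m)}\right|
  \le \frac{m!}{|s|\cdots|s+m|}\cdot\prod_{n\le N}\frac{|s+n|}{|s+\chi_k+n|}.
\end{equation*}
I keep only $N=2J+A+1$ factors (with $A$ the polynomial growth exponent of $\zeta$ on the vertical line $\Re z=\Re s$) to get decay $|k|^{-2J-2}$. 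The prefactor $m!/|(s)_{m+1}|$ is $O(m^{-\Re s})$ (for $s$ not a nonpositive integer; the excluded poles take care of this). Summing over $|k|>c\sqrt m$ therefore gives $O(m^{-\Re s-J})$.

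For $|\chi_k|\le\sqrt m$, the hypothesis $\max(1,|1+s+\chi_k|^2)\le c_1 m$ of Theorem \ref{thm:gammaratio} holds. The error is then $O(|s+\chi_k|^{2J}m^{-J})$ times $|\Gamma(s+\chi_k)\zeta(s+\chi_k)|\,m^{-\Re s}$. The factor $\Gamma(s+\chi_k)$ decays exponentially in $k$, so the sum over $k$ is $O(m^{-\Re s-J})$. I then restore the truncated Fourier coefficients $|\chi_k|>\sqrt m$ in each $\phi_{j,s}$. These contribute exponentially small amounts, again by the decay of $\Gamma$ times the polynomial factor $P_{j,1}$. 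This yields the claimed expansion, and each $\phi_{j,s}$ is a smooth $1$-periodic function for the same reason.

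Non-negative even $s$ and the other negative integers that are not excluded need no separate case analysis like the one in Theorem \ref{thm:asymp}. The formula in Theorem \ref{thm:asympum} has no $\Gamma(s+1)^{-1}$ factor. At $s=-2p$ the $k=0$ term $\Gamma\zeta$ is finite, since the trivial zero cancels the Gamma pole, and $P_{j,1}(-2p)$ is a polynomial value. The only point to watch there is that $m!/(s)_{m+1}$ itself is regular; indeed $\Gamma(m+1)\Gamma(s)/\Gamma(m+1+s)$ was already regrouped, so the argument goes through with $\Gamma(s)\zeta(s)$ treated as one finite quantity. The main obstacle I expect is the uniformity in the Gamma-ratio step, because $\beta$ grows like $\sqrt m$. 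Theorem \ref{thm:gammaratio} is designed for exactly this, so the main work is checking that the identification $Q_j(1,1+w)=P_{j,1}(w)$ is correct.
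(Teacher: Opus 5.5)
Your proposal follows the paper's own route, which the paper only sketches. You start from \eqref{eq:umres}, apply Theorem~\ref{thm:gammaratio} with $z=m$, $\alpha=1$, $\beta=1+s+\chi_k$ on the range $|\chi_k|\le\sqrt m$, bound the complementary range crudely, and then restore the full Fourier series. Your identification $Q_j(1,1+w)=P_{j,1}(w)$, via the second recurrence and $Q_j(u,u)=0$, is correct; the paper simply defers this point to \cite[Rem.~3 \& 8]{burnoleuler}.

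There is one concrete slip, in the tail estimate for $|\chi_k|>\sqrt m$.

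\emph{What fails.} The values $s=-2p$, $p\geq1$, are admissible: the hypothesis only removes $1,0,-1,-3,\dots$ modulo $(\log b)^{-1}2\pi i\ZZ$. For these $s$, your comparison prefactor $m!/|(s)_{m+1}|$ is infinite as soon as $m\geq 2p$, so your tail bound says nothing there. Your parenthetical ``the excluded poles take care of this'' is therefore false. So is the later claim that $m!/(s)_{m+1}$ is regular at $s=-2p$. What is regular is the product $\frac{m!}{(s)_{m+1}}\zeta(s)$, and that correctly settles the $k=0$ term in the main range. The tail, however, consists of terms with $k\neq0$, and you routed their bound through $(s)_{m+1}$, where no cancellation with $\zeta$ is available.

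\emph{The repair.} Fix $n_0$ with $\Re s+n_0>0$ and compare only the factors with $n\geq n_0$. For $|\chi_k|\geq 2|\Im s|$ and $m\geq n_0+N-1$,
\[
\Bigl|\frac{m!}{(s+\chi_k)_{m+1}}\Bigr|\le \frac{m!}{|(s+n_0)_{m+1-n_0}|}\,\prod_{n=n_0}^{n_0+N-1}\frac{|s+n|}{|s+\chi_k+n|}\,\prod_{n=0}^{n_0-1}\frac{1}{|s+\chi_k+n|}.
\]
\begin{itemize}
\item The first factor equals $\Gamma(m+1)|\Gamma(s+n_0)|/|\Gamma(m+1+s)|=O(m^{-\Re s})$.
\item The middle product is $O(|k|^{-N})$, as in your argument.
\item The last product is bounded, since $|s+\chi_k+n|\geq|\chi_k|/2$ for these $k$.
\end{itemize}
With this, the tail is $O(m^{-\Re s-J})$ for every admissible $s$, and the rest of your argument goes through unchanged.

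Also, ``non-negative even $s$'' in your last paragraph should read ``negative even $s$''.
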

\begin{proof}
  We start from Equation \eqref{eq:umres}, and then use
  Theorem \ref{thm:gammaratio}
   with $z=m$, $\alpha=1$, $\beta=\alpha+s+\chi_k$.  Details are exactly as
  in the proof of Theorem \ref{thm:asymp} (with less complications about
  handling especially the case of $s$ a negative integer), and are left to the
  reader.
  See also \cite[Rem.\@ 3 \& 8]{burnoleuler} about the $P_{j,1}$ polynomials.
\end{proof}
\begin{rema}
  In view of Equation \eqref{eq:umber}, it may be legitimate to think that the
  indexing should be by $m+1$, not by $m$.  Both Theorems \ref{thm:asymp} and
  \ref{thm:asympum} can be slightly modified to provide expansions in inverse
  powers of $m+1$ and slightly different periodic functions in $\log_b(m+1)$.
  The proofs are almost identical, and the details are left to the reader.
\end{rema}
\section{A numerical example}

This section is not devoted to the computation of $\zeta(s)$ via Equation
\eqref{eq:zetaum}, but to the interesting oscillations of the coefficients
entering this formula.  We provide in Table \ref{tab:1} a plot of
$u_m^*(\frac12)$, for $b=2$, versus $\log_2(m)$ in abscissa (this was done
with \textsf{Python} and the \textsf{mpmath} and \textsf{matplotlib}
libraries).  We plot the range from $m=14$ to $m=2000$.  The coefficients were
computed using the recurrence Equation \eqref{eq:umstarrec}, starting with
$u_0^*(\frac12) = (1 - \sqrt 2)^{-1}$.  The choice of $m=14$ as starting value
for the plot is in order for the oscillations to be visible, they would barely
be so, if we started with $m=1$, $\log_2(m) = 0$.  The continuous curve in the
plot is the $j=0$ term from Theorem \ref{thm:asymp} on top of the average
$2\zeta(\frac12)/\log(2)$, using for the actual calculation only the first
Fourier coefficients $k=\pm1$ (due to exponential decrease of the Gamma
function, the next Fourier coefficients are already much smaller).
\begin{table}[htbp]
  \centering
  \noindent\includegraphics[width=\linewidth]{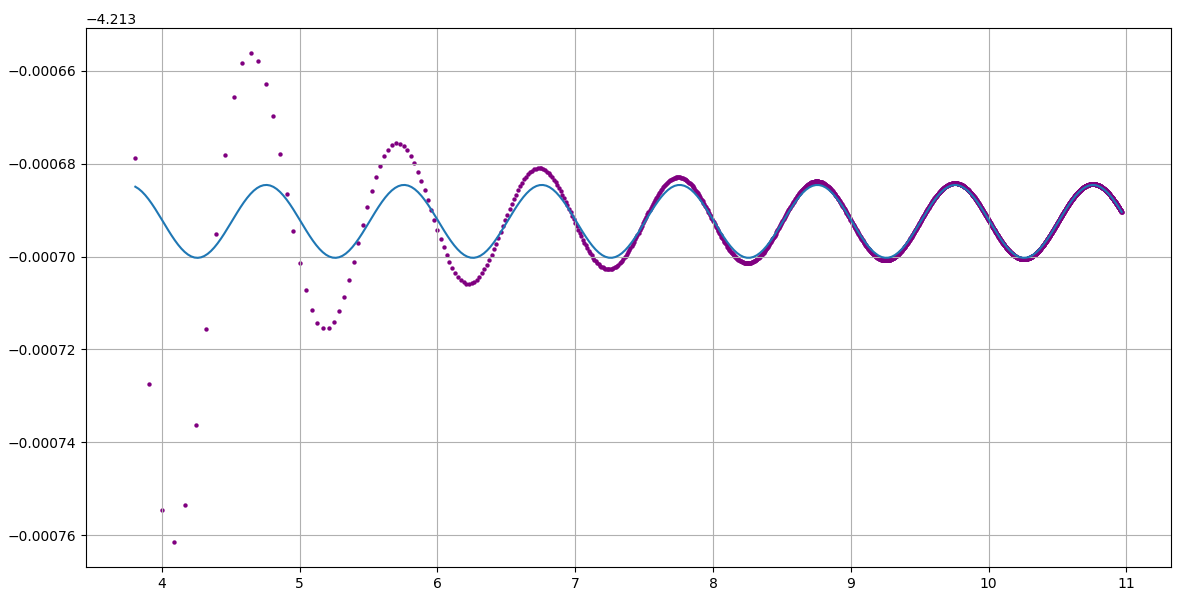}
  \caption{$u_m^*(\frac12)$ for $b=2$ and $14\leq m \leq 2000$, abscissa is
    $\log_2(m)$.}
  \label{tab:1}
\end{table}
As part of the computation of the plot, we evaluated numerically the average
over the last period to be about $\np{-4.2136924433}$ to be compared to the
expected value $(\log 2)^{-1}2\zeta(\frac12) \approx \np{-4.2136924156}$.
Surprisingly, using only the $51\leq m\leq 100$ range for estimating (suitably)
the
average we obtained $\np{-4.2136921713}$ which already has
six decimal places in common with $(\log 2)^{-1}2\zeta(\frac12)$.

For more examples, see \cite{burnolosc}.  In that reference the general case
of Dirichlet series with missing digits is treated, but the oscillations of
the associated moments are established only for $s$ in the half-plane of
absolute convergence of the series.  We expect to return to these topics on
some other occasion.

\singlespacing

\footnotesize


\newcommand\arxivurl[1]{\href{https://arxiv.org/abs/#1}{\textsf{arXiv:#1}}}
\providecommand\bibcommenthead{}
\def\blocation#1{\unskip}
\def\burl#1{\url{#1}}



\bigskip
\noindent
  Université de Lille,
  Faculté des Sciences et technologies,
  Département de mathématiques,
  Cité Scientifique,
  F-59655 Villeneuve d'Ascq cedex,
  France.
\newline
\strut \texttt{jean-francois.burnol@univ-lille.fr}

\end{document}